\title{Some new surfaces of general type with maximal Picard number} 
\author{Partha Solapurkar}
\address{Department of Mathematics\\
Purdue University\\
West Lafayette, IN 47907\\
U.S.A.}
\email{psolapur@purdue.edu}
\date{}  
\newtheorem{theorem}{Theorem}
\newtheorem{proposition}[theorem]{Proposition}
\newtheorem{lemma}[theorem]{Lemma}
\DeclareMathOperator{\NS}{NS}
\DeclareMathOperator{\tr}{tr}
\DeclareMathOperator{\Gr}{Gr}
\DeclareMathOperator{\Sp}{Sp}
\DeclareMathOperator{\Hom}{Hom}
\DeclareMathOperator{\End}{End}
\newcommand {\CC} {\mathbb C}
\newcommand {\ZZ} {\mathbb Z}
\newcommand {\RR} {\mathbb R}
\newcommand {\QQ} {\mathbb Q}
\newcommand {\HHH} {\mathbb H}
\newcommand {\HH} {\mathfrak H}
\newcommand {\EE} {\mathcal E}
\newcommand {\OO} {\mathcal O}
\newcommand {\OOO} {\mathscr O}
\newcommand {\sF} {\mathscr F}
\newcommand {\XX} {\mathcal X}
\newcommand {\WW} {\mathcal W}
\newcommand {\AAA} {\mathcal A}
\begin{document}
\maketitle

\begin{abstract}
  We construct some complex surfaces of general type with maximal Picard number. These examples arise as fibrations of genus two curves over quaternionic Shimura curves. 
\end{abstract}

\section{Introduction}
The Neron-Severi group of an algebraic surface $ X $ is the group $ \NS(X) $ of divisors on $ X $ modulo algebraic equivalence. The rank $ \rho(X) $ of $ \NS(X) $ is called the {\em Picard number} of $ X $. The first Chern class gives an isomorphism
\[ c_1 : \NS(X) \otimes_\ZZ \QQ \overset{\sim}\longrightarrow H^2(X, \QQ) \cap H^{1,1}(X). \]
This gives an obvious upper bound on the Picard number:
\[ \rho(X) \leq h^{1,1}(X). \]
We are looking for \emph{nontrivial} examples of surfaces which achieve this bound. We will say that such a surface is {\em Picard maximal}. A trivial set of examples is given by surfaces with geometric genus zero, since in that case
\[ H^2(X, \QQ) \cap H^{1,1}(X) = H^2(X, \QQ). \]

In \cite{shioda}, Shioda constructs elliptic modular surfaces and among other things, proves that they are Picard maximal. Roughly speaking, our construction tries to make something similar work in the case of genus two fibrations.

\subsection*{Acknowledgements}
I would like to thank my advisor Professor Donu Arapura for numerous conversations and guidance throughout this project. I would also like thank Professor Ben McReynolds for his suggestions regarding Shimura/Teichm\"uller curves in the moduli space of curves. My thanks also to Nicholas Miller for helping me with quaternion algebras. 

\section{Hodge theory of fibered surfaces}
Let $ X $ be a connected nonsingular complex projective algebraic surface, let $ C $ be a connected nonsingular complex projective curve and let $ f : X \to C $ be a nonconstant projective algebraic morphism with connected fibers. We assume that $ f $ admits a section $ \sigma $. Let $ S $ be the subset of $ C $ consisting of the points $ s $ such that the fiber $ X_s $ is singular. Let $ m_s $ be the number of irreducible components of $ X_s $. Let $ U = C - S $ and $ X^\circ = f^{-1}(U) $. Let $ g $ denote the genus of a nonsingular fiber. 

We analyze the Hodge structure on $ H^2(X, \QQ) $:

In \cite{zucker}, Zucker proves that the Leray spectral sequence for $ f $ degenerates on page 2 \cite[Cor. 15.15]{zucker} and respects the Hodge structures. Let the Leray filtration on $ H^2(X, \QQ) $ be
\[ H^2(X, \QQ) = L^0 \supseteq L^1 \supseteq L^2 \supseteq L^3 = 0. \]
Then we have:
\begin{enumerate}
\item $ L^0/L^1 \cong H^0(C, R^2f_*\QQ) $. 
\item $ L^1/L^2 \cong H^1(C, R^1f_*\QQ) $. This is the ``mysterious'' piece. 
\item $ L^2/L^3 \cong H^2(C, f_*\QQ) = H^2(C, \QQ) \cong \QQ $, since the fibers are connected. 
\end{enumerate} 
Each of these pieces are Hodge structures of weight $ 2 $. 

\begin{lemma}
  \[ R^2f_*\QQ = \QQ \bigoplus M, \] where $ M $ is a skyscraper sheaf with stalks $ M_s = \QQ^{m_s-1} $ for $ s \in S $. 
\end{lemma}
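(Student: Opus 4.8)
The plan is to build a natural surjection $R^2f_*\QQ \twoheadrightarrow \QQ_C$, split it using the section $\sigma$, and identify its kernel by a stalkwise computation. Throughout I use that $f$ is proper, so that by proper base change $(R^2f_*\QQ)_s \cong H^2(X_s,\QQ)$ for every $s\in C$; this is $\QQ$ for $s\in U$ (top cohomology of a smooth connected curve) and, as we will check, $\QQ^{m_s}$ for $s\in S$. First I would treat the restriction to $U$: the family $X^\circ\to U$ is a fiber bundle of smooth genus $g$ curves, and its monodromy acts on the top cohomology $H^2(X_t,\QQ)\cong\QQ$ of a fiber through orientation-preserving diffeomorphisms, hence trivially. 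So $R^2f_*\QQ|_U$ is the constant sheaf $\QQ_U$, with distinguished generator the fiberwise orientation (``point'') class. Since $C$ is a smooth, in particular unibranch, curve and $U\subseteq C$ is dense, $j_*\QQ_U=\QQ_C$ for the open inclusion $j:U\hookrightarrow C$.

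Next I would introduce the map $\alpha:R^2f_*\QQ \to j_*j^*R^2f_*\QQ = \QQ_C$, the unit of adjunction; it is an isomorphism over $U$ by construction. To see it is surjective at each $s\in S$ I would use the section: the cycle class $[\sigma(C)]\in H^2(X,\QQ)$ maps, under the Leray edge homomorphism $H^2(X,\QQ)\to H^0(C,R^2f_*\QQ)=L^0/L^1$, to a global section $\tau$ of $R^2f_*\QQ$ whose restriction to $U$ is the generator $1$ of $\QQ_U$ (the class of a point on each fiber). Hence $\alpha(\tau)$ is a global section of $\QQ_C$ equal to $1$ on the dense open $U$, so $\alpha(\tau)=1$ and in particular $\alpha$ is onto. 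Setting $M=\ker\alpha$, we get a short exact sequence $0\to M\to R^2f_*\QQ \xrightarrow{\alpha}\QQ_C\to 0$ with $M$ supported on $S$ and $M_s=\ker\bigl(H^2(X_s,\QQ)\to\QQ\bigr)$.

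The one genuine computation is $\dim_\QQ H^2(X_s,\QQ)=m_s$ for $s\in S$. Since rational cohomology is insensitive to nilpotents, I would replace $X_s$ by $(X_s)_{\mathrm{red}}$, a connected projective curve with $m_s$ irreducible components, and let $\nu:\widetilde{X_s}\to (X_s)_{\mathrm{red}}$ be its normalization; $\widetilde{X_s}$ is a disjoint union of $m_s$ smooth connected projective curves, so $H^2(\widetilde{X_s},\QQ)=\QQ^{m_s}$. From the exact sequence of sheaves $0\to \QQ_{(X_s)_{\mathrm{red}}}\to \nu_*\QQ_{\widetilde{X_s}}\to \mathcal T\to 0$, where $\mathcal T$ is a skyscraper (stalk $\QQ^{b-1}$ at a point with $b$ branches), and using that $\nu$ is finite and that $\mathcal T$ has no higher cohomology, the long exact sequence gives $H^2((X_s)_{\mathrm{red}},\QQ)\cong H^2(\widetilde{X_s},\QQ)=\QQ^{m_s}$. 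Therefore $\dim M_s = m_s-1$, so $M$ is a skyscraper sheaf with $M_s=\QQ^{m_s-1}$ for $s\in S$.

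Finally I would split the sequence: the sheaf homomorphism $\tau:\QQ_C\to R^2f_*\QQ$ attached to the global section $\tau$ satisfies $\alpha\circ\tau=\mathrm{id}_{\QQ_C}$, being a global endomorphism of $\QQ_C$ that is the identity over the dense open $U$. Hence $R^2f_*\QQ\cong \QQ_C\oplus M$. (One can also bypass the section at this last step: $\operatorname{Ext}^1_{\QQ_C}(\QQ_C,M)=H^1(C,M)=0$ because $M$ is a skyscraper sheaf, so the sequence splits automatically.) The main obstacle is really just the stalk identification $H^2(X_s,\QQ)\cong\QQ^{m_s}$ together with the surjectivity of $\alpha$ at the singular points; once these topological inputs are in place the statement follows formally, and indeed all the arithmetic subtlety of the fibration is concentrated in $R^1f_*\QQ$, not in $R^2f_*\QQ$.
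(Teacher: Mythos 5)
Your proof is correct, and it follows the same overall skeleton as the paper (the adjunction unit $R^2f_*\QQ \to j_*j^{-1}R^2f_*\QQ$, the identification of the target with the constant sheaf via triviality of the monodromy on the top cohomology of an oriented fiber, and the kernel as a skyscraper), but it diverges at the one nontrivial step. The paper gets surjectivity of the adjunction map on stalks at $s\in S$ from the local invariant cycle theorem, whereas you get it from the hypothesis that $f$ has a section: the class $[\sigma(C)]$ pushed through the Leray edge map gives a global section of $R^2f_*\QQ$ restricting to the point class on every smooth fiber, which forces surjectivity at the singular points and simultaneously hands you the splitting $\alpha\circ\tau=\mathrm{id}$. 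This is more elementary (no appeal to a deep degeneration theorem) but uses the section in an essential way; the paper's route via local invariant cycles would work even without a section (the splitting then still comes for free from $\operatorname{Ext}^1(\QQ_C,M)=H^1(C,M)=0$, as you note), which matters in the paper's application where $f_n$ only admits a multisection. You also make explicit two points the paper leaves implicit: the stalk computation $\dim H^2(X_s,\QQ)=m_s$ via the normalization sequence, and the splitting of the extension. Both arguments are sound.
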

\begin{proof}
  There is a map $ \eta_2 : R^2f_*\QQ \to j_*j^{-1}R^2f_*\QQ $ - the unit of adjunction. Clearly, the two sheaves coincide on $ U $ under $ \eta_2 $. For $ s \in S $, we have $ (R^2f_*\QQ)_s = H^2(X_s, \QQ) $ and $ (j_*j^{-1}R^2f_*\QQ)_s $ consists of the local invariant cycles around $ s $. By the local invariant cycle theorem, the map $ (R^2f_*\QQ)_s \to (j_*j^{-1}R^2f_*\QQ)_s $ is surjective. Next, $ j_*j^{-1}R^2f_*\QQ = j_*\QQ $, since the stalks are all isomorphic to $ \QQ $, and the nonsingular fibers are all oriented compact Riemann surfaces. It follows that the kernel of $ \eta_2 $ is \[ M = \bigoplus_{s \in S} \QQ^{m_s-1}. \]
\end{proof}

\begin{proposition}
  $ L^0/L^1 $ is spanned by the cohomology classes of the divisors. 
\end{proposition}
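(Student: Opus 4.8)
The plan is to extract the structure of $L^0/L^1$ from the preceding lemma and then, for each piece of that structure, write down an explicit divisor class that surjects onto it. By the lemma we have $R^2f_*\QQ = \QQ\oplus M$ with $M$ a skyscraper sheaf, so
\[ L^0/L^1 \;\cong\; H^0(C,R^2f_*\QQ) \;=\; H^0(C,\QQ)\oplus H^0(C,M) \;=\; \QQ \;\oplus\; \bigoplus_{s\in S} M_s, \]
where $M_s\cong\QQ^{m_s-1}$. The isomorphism $L^0/L^1\cong H^0(C,R^2f_*\QQ)$ is the Leray edge homomorphism, and its composition with evaluation at a point $t\in C$ is the restriction map $H^2(X,\QQ)\to H^2(X_t,\QQ)=(R^2f_*\QQ)_t$; I will use this throughout. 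Thus it suffices to produce divisors whose images span the two summands $\QQ$ and $\bigoplus_{s}M_s$.

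For the first summand I would use the section $\sigma$: when $X_t$ is smooth, $\sigma(C)\cap X_t$ is a single reduced point, so $[\sigma(C)]$ restricts on $X_t$ to the point class, which generates $H^2(X_t,\QQ)=\QQ$; hence the image of $[\sigma(C)]$ in $H^0(C,R^2f_*\QQ)$ has a generator as its $H^0(C,\QQ)$-component. (Any multisection would serve equally well.)

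For the second summand, fix $s\in S$ with irreducible components $Z_{s,1},\dots,Z_{s,m_s}$, occurring in $X_s$ with multiplicities $a_{s,1},\dots,a_{s,m_s}$. Each $Z_{s,i}$ is contained in the fiber $X_s$ and hence disjoint from every other fiber, so $[Z_{s,i}]$ restricts to $0$ on $X_t$ for every $t\neq s$; reading this off in the decomposition $\QQ\oplus M$ shows that the image of $[Z_{s,i}]$ in $H^0(C,R^2f_*\QQ)$ has vanishing $H^0(C,\QQ)$-component and is supported at $s$, i.e.\ it lies in $M_s$. It remains to check that these $m_s$ classes span $M_s$. Here I would identify $H^2(X_s,\QQ)\cong\QQ^{m_s}$ by sending a class to the tuple of its degrees on the $m_s$ components (legitimate because $X_s$, being a fiber, is a connected projective curve, so $H^2(X_s,\QQ)$ is freely generated by the component classes); under this identification the restriction map sends $[Z_{s,i}]$ to the vector $\big(Z_{s,i}\cdot Z_{s,j}\big)_{j}$, and so the span of the images of the $[Z_{s,i}]$ is precisely the column space of the intersection matrix $\big(Z_{s,i}\cdot Z_{s,j}\big)$. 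By Zariski's lemma this matrix is negative semidefinite and, the fiber being connected, its kernel is spanned by the multiplicity vector $(a_{s,j})_{j}$; hence it has rank $m_s-1=\dim M_s$, and since its column space already lies inside $M_s$, it is all of $M_s$.

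Combining the two cases, the divisor classes $[\sigma(C)]$ and $\{[Z_{s,i}]:s\in S,\ 1\le i\le m_s\}$ together span $H^0(C,\QQ)\oplus\bigoplus_{s}M_s\cong L^0/L^1$, which is the assertion. The step I expect to carry the real weight is the last one: showing that the components of a singular fiber fill out all of $M_s$, not merely a proper subspace. This is exactly where Zariski's lemma enters — the negative semidefiniteness of the fiber intersection form, together with the fact that its kernel is exactly the line of multiplicities, reduces the statement to a rank count. The surrounding bookkeeping (the behaviour of the Leray edge map and the decomposition of $R^2f_*\QQ$ furnished by the lemma) is routine but should be written out with some care.
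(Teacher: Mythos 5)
Your proof is correct and follows the same route as the paper: the section $\sigma$ accounts for the $H^0(C,\QQ)$ summand and the components of the singular fibers account for $H^0(C,M)$. The only difference is that you justify the spanning of each $M_s$ via Zariski's lemma (rank $m_s-1$ of the fiber intersection matrix), a step the paper asserts without comment.
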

\begin{proof}
  $ L^0/L^1 = H^0(C, R^2f_*\QQ) = H^0(C, \QQ) \bigoplus H^0(C, M) \cong \QQ \oplus \bigoplus_{s \in S} \QQ^{m_s-1}$. The classes in $ H^0(C, \QQ) \cong \QQ $ are spanned by the image of the section $ \sigma $, and the classes in $ H^0(C, M) = \bigoplus_{s \in S} \QQ^{m_s-1} $ are spanned by the irreducible components of $ X_s $ (except the multiples of $ X_s $ itself). 
\end{proof}
Also, 
\begin{proposition}
  $ L^2/L^3 $ is spanned by the cohomology class of a fiber. 
\end{proposition}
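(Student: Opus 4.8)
The plan is to identify the class of a fiber inside $H^2(X,\QQ)$ and show that its image under the projection $L^2 \twoheadrightarrow L^2/L^3$ is a generator of $L^2/L^3 \cong \QQ$. First I would recall that $L^2/L^3 \cong H^2(C, f_*\QQ) = H^2(C,\QQ)$, where the last identification comes from $f_*\QQ = \QQ_C$ (the fibers are connected), and that this is a rank-one $\QQ$-vector space spanned by the fundamental cohomology class $[\pt]$ of a point on $C$. So the statement amounts to checking that the class $[X_p]$ of a fiber $X_p = f^{-1}(p)$, for $p \in C$, actually lies in $L^2$ and maps to (a nonzero multiple of) $[\pt]$ under the edge isomorphism.

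The key step is the compatibility of the Leray filtration with pullback along $f$. Since $f^* : H^2(C,\QQ) \to H^2(X,\QQ)$ is a morphism of spectral sequences from the trivial Leray spectral sequence of $\mathrm{id}_C$, it sends $H^2(C,\QQ)$ into $L^2 H^2(X,\QQ)$, and the induced map on graded pieces $H^2(C,\QQ) = \mathrm{Gr}^2_L H^2(C) \to L^2/L^3 = H^2(C, f_*\QQ)$ is exactly the edge map induced by the unit $\QQ_C \to f_*f^*\QQ_C = f_*\QQ$, which is an isomorphism. Hence $f^*[\pt]$ lies in $L^2$ and generates $L^2/L^3$. Then I would observe $f^*[\pt] = [f^{-1}(p)] = [X_p]$: the pullback of the fundamental class of a point is the fundamental class of its fiber (this is immediate from, e.g., the projection formula or a transversality/Poincar\'e-duality argument, using that $X_p$ is a divisor on $X$). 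This exhibits the fiber class as a generator of $L^2/L^3$, which is the assertion.

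The main obstacle — really the only delicate point — is justifying cleanly that $f^*$ carries the Leray filtration on $H^2(C)$ into the Leray filtration on $H^2(X)$ and induces the stated edge map on graded quotients; this is the functoriality of the Leray spectral sequence in the map $f$ (comparing it to the spectral sequence of $\mathrm{id}_C$), and one must make sure the identifications $L^2/L^3 \cong H^2(C, f_*\QQ)$ and $\mathrm{Gr}^2_L H^2(C,\QQ) \cong H^2(C,\QQ)$ are the natural ones so that the composite is genuinely the isomorphism induced by $\QQ_C \xrightarrow{\sim} f_*\QQ$. Once that bookkeeping is in place, the nonvanishing of the fiber class in $L^2/L^3$ is automatic, and no computation of intersection numbers is needed (though one could alternatively verify $[X_p]^2 = 0$ while $[X_p]\cdot[\sigma(C)] = 1$ to see directly that $[X_p]$ is nonzero and not already in $L^1$).
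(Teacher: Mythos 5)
Your proposal is correct and follows essentially the same route as the paper, whose one-line proof is exactly that $f^*H^2(C,\QQ)\subset H^2(X,\QQ)$ sends the class of a point to the class of a fiber (i.e.\ the standard fact that the bottom Leray piece is the image of $f^*$ and the edge map is induced by $\QQ_C\xrightarrow{\sim}f_*\QQ$); you simply spell out the functoriality bookkeeping that the paper leaves implicit.
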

\begin{proof}
  The inclusion $ f^*H^2(C, \QQ) \subset H^2(X, \QQ) $ sends the cohomology class of a point on $ C $ to the cohomology class of a fiber. 
\end{proof}

Clearly, all the classes in $ L^0/L^1 $ and $ L^2/L^3 $ are of type $ (1,1) $ with respect to the Hodge structure on $ H^2(X, \QQ) $. Next, we deal with the mysterious piece $ H^1(C, R^1f_*\QQ) $. 

\begin{lemma}
  $ R^1f_*\QQ = j_*j^{-1}R^1f_*\QQ $. 
\end{lemma}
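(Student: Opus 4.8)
The plan is to run the same argument as in the previous lemma. Since $R^1f_*\QQ$ and $j_*j^{-1}R^1f_*\QQ$ agree on $U$ and the unit of adjunction $\eta_1 \colon R^1f_*\QQ \to j_*j^{-1}R^1f_*\QQ$ is an isomorphism there, it is enough to show $\eta_1$ is an isomorphism on stalks at each $s \in S$. Fix a small disk $\Delta \subset C$ around $s$, set $\Delta^* = \Delta \setminus \{s\}$, $X_\Delta = f^{-1}(\Delta)$, $X_\Delta^* = f^{-1}(\Delta^*)$, and let $T_s$ be the local monodromy on $H^1(X_t, \QQ)$ for $t \in \Delta^*$. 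By proper base change $(R^1f_*\QQ)_s = H^1(X_s, \QQ)$, while $(j_*j^{-1}R^1f_*\QQ)_s = H^0(\Delta^*, R^1f_*\QQ) = H^1(X_t, \QQ)^{T_s}$, the local invariant cycles, and $\eta_1$ at $s$ is the specialization (restrict-to-a-nearby-fiber) map. Its surjectivity is exactly the local invariant cycle theorem, invoked as before, so the only remaining point is injectivity, i.e.\ that $H^1(X_s, \QQ) \to H^1(X_t, \QQ)$ is injective.

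For this, note first that $X_s$ is a deformation retract of $X_\Delta$, so $H^1(X_s, \QQ) = H^1(X_\Delta, \QQ)$ and the map in question becomes $H^1(X_\Delta, \QQ) \to H^1(X_t, \QQ)$, which factors through the restriction $H^1(X_\Delta, \QQ) \xrightarrow{a} H^1(X_\Delta^*, \QQ)$. Since $X_\Delta$ is a smooth --- hence oriented --- real $4$-manifold and $X_\Delta^* = X_\Delta \setminus X_s$, duality for cohomology with supports in the closed set $X_s$ gives $H^k(X_\Delta, X_\Delta^*) \cong H_{4-k}(X_s, \QQ)$; as $X_s$ is a curve, $H_3(X_s, \QQ) = 0$, so $a$ is injective. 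On the other hand, the Leray spectral sequence for $f \colon X_\Delta^* \to \Delta^*$ degenerates (the base has the homotopy type of a circle) and shows that the kernel of the further restriction $H^1(X_\Delta^*, \QQ) \to H^1(X_t, \QQ)$ is the line $f^*H^1(\Delta^*, \QQ)$. Hence
\[ \ker\bigl(H^1(X_s, \QQ) \to H^1(X_t, \QQ)\bigr) \;\cong\; a\bigl(H^1(X_\Delta, \QQ)\bigr) \cap f^*H^1(\Delta^*, \QQ). \]

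It remains to see this intersection is $0$, and this is where the standing assumption that $f$ has a section $\sigma$ is used. A class in the intersection is at once the restriction to $X_\Delta^*$ of some $\beta \in H^1(X_\Delta, \QQ)$ and equal to $f^*\gamma$ for some $\gamma \in H^1(\Delta^*, \QQ)$; applying $\sigma^*$ and using $f \circ \sigma = \mathrm{id}$ gives $\gamma = (\sigma^*\beta)|_{\Delta^*}$, which vanishes because $\sigma^*\beta \in H^1(\Delta, \QQ) = 0$, so the class is $0$. Thus $\eta_1$ is injective, hence an isomorphism, at every $s \in S$, and therefore $R^1f_*\QQ = j_*j^{-1}R^1f_*\QQ$. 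I expect the injectivity to be the real content, and within it the step of killing the horizontal class $f^*\gamma$ pulled back from the base: without a section that step is circular, since the contradiction one needs is precisely the injectivity being proved --- which is presumably why a section is part of the running hypotheses.
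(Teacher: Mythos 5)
Your proof is correct, but it takes a genuinely different route from the paper. The paper handles surjectivity of the adjunction map by the local invariant cycle theorem (as you do) and then simply cites \cite[Cor.\ 13.4.3]{arapura} for injectivity, whereas you prove injectivity by hand: identify the stalk map at $s \in S$ with the specialization map $H^1(X_\Delta,\QQ) \to H^1(X_t,\QQ)$, use duality with supports ($H^1(X_\Delta, X_\Delta^*) \cong H_3(X_s,\QQ) = 0$, valid since $X$ is smooth so $X_\Delta$ is an oriented $4$-manifold and $X_s$ is compact of real dimension $2$) to see that restriction to $X_\Delta^*$ is injective, use the Leray sequence over the homotopy circle $\Delta^*$ to pin the remaining kernel down to $f^*H^1(\Delta^*,\QQ)$, and kill that class with the section $\sigma$ via $\sigma^*\beta \in H^1(\Delta,\QQ)=0$. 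All the individual steps check out, so this buys a self-contained, purely topological argument in place of a black-box citation, at the cost of invoking the retraction of $X_\Delta$ onto $X_s$ and Alexander--Lefschetz duality. One small caveat about your closing remark: the section is a convenience here, not a necessity --- the class $f^*\gamma$ can also be ruled out by weight considerations (it is pure of weight $2$ and $f^*$ is injective and strict, while the image of $H^1(X_s,\QQ)$ has weights $\leq 1$), which is in the spirit of the general injectivity statement the paper cites; so your suspicion that the argument is circular without a section is not accurate, though under the paper's running hypotheses your use of $\sigma$ is perfectly legitimate and simpler.
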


\begin{proof}
  The unit of adjunction $ \eta_1: R^1f_*\QQ \to j_*j^{-1}R^1f_*\QQ $ is surjective because of the local invariant cycle theorem, exactly as above. It is also injective \cite[Cor. 13.4.3]{arapura}. 
\end{proof}

We call $ f : X \to C $ {\em extremal} if $ H^1(C, j_*j^{-1}R^1f_* \CC)^{1,1} $ vanishes.

\begin{proposition}
  If $ f : X \to C $ is extremal, then $ X $ is Picard maximal. 
\end{proposition}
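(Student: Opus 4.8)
The plan is to establish the reverse of the inequality $\rho(X) \le h^{1,1}(X)$ recorded in the introduction: I would compute $h^{1,1}(X)$ from the Leray filtration and then exhibit that many linearly independent classes inside $\NS(X)\otimes\QQ$.

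\emph{Computing $h^{1,1}(X)$.} By Zucker's theorem the terms $L^i$ of the Leray filtration on $H^2(X,\QQ)$ are sub-Hodge structures of the weight-two Hodge structure $H^2(X,\QQ)$. For any filtration of a pure Hodge structure by sub-Hodge structures the Hodge numbers are additive over the graded pieces, so
\[ h^{1,1}(X) = \dim_\CC(\Gr^0_L\otimes\CC)^{1,1} + \dim_\CC(\Gr^1_L\otimes\CC)^{1,1} + \dim_\CC(\Gr^2_L\otimes\CC)^{1,1}. \]
I would then read off the three terms from the results already in hand. The outer pieces $\Gr^0_L \cong H^0(C,R^2f_*\QQ)$ and $\Gr^2_L \cong H^2(C,\QQ)$ were shown above to be spanned over $\QQ$ by classes of divisors (the section, components of singular fibers, a fiber), so as noted there they are Hodge structures purely of type $(1,1)$; hence the dimension of the $(1,1)$-part of each equals its $\QQ$-dimension. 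For the middle piece, the Lemma gives $\Gr^1_L \cong H^1(C,R^1f_*\QQ) = H^1(C,j_*j^{-1}R^1f_*\QQ)$, whose complexification is $H^1(C,j_*j^{-1}R^1f_*\CC)$, and its $(1,1)$-part vanishes precisely because $f$ is extremal. Therefore $h^{1,1}(X) = \dim_\QQ \Gr^0_L + 1$.

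\emph{Producing algebraic classes.} Put $a = \dim_\QQ\Gr^0_L$. Using the Proposition that $\Gr^0_L = L^0/L^1$ is spanned by classes of divisors on $X$, I would pick divisors $D_1,\dots,D_a$ whose classes reduce to a basis of $L^0/L^1$, and adjoin the class $F$ of a fiber. Each of these $a+1$ classes is of type $(1,1)$, hence lies in $\NS(X)\otimes\QQ$. They are linearly independent: in a relation $\sum_i c_i[D_i] + cF = 0$, the class $F$ lies in $L^2\subseteq L^1$ and so reduces to $0$ in $L^0/L^1$, forcing all $c_i=0$; then $cF=0$ forces $c=0$ since $F\ne 0$ (for instance, $F$ meets the section once). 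Hence $\rho(X)\ge a+1 = h^{1,1}(X)$, and together with the reverse inequality we conclude $\rho(X)=h^{1,1}(X)$, i.e.\ $X$ is Picard maximal.

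I expect the only delicate point to be the first step, that $h^{1,1}(X)$ breaks up as the sum of the $(1,1)$-dimensions of the graded pieces $\Gr^i_L$: this rests entirely on Zucker's result that the Leray spectral sequence degenerates at $E_2$ compatibly with the Hodge structures. One should also check that the Hodge structure on $\Gr^1_L$ implicit in the definition of ``extremal'' is the subquotient Hodge structure coming from $H^2(X,\QQ)$, which is again part of Zucker's statement; granting that, the rest of the argument is bookkeeping.
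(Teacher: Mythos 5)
Your proof is correct and follows essentially the same route as the paper: decompose $H^2(X,\QQ)$ via the Leray filtration, note that $L^0/L^1$ and $L^2/L^3$ are of type $(1,1)$ and spanned by divisor classes, and use extremality to kill the $(1,1)$-part of the middle piece. You simply spell out the dimension count and linear-independence bookkeeping that the paper's one-line proof leaves implicit.
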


\begin{proof}
  By propositions 2 and 3, if $ H^1(C, j_*j^{-1}R^1f_* \CC)^{1,1} $ is spanned by divisors, then $ X $ is Picard maximal. In particular, if $ H^1(C, j_*j^{-1}R^1f_* \CC)^{1,1} = 0 $, then $ X $ is Picard maximal. 
\end{proof}

Here we have given a Hodge structure to $ H^1(C, j_*j^{-1}R^1f_*\QQ) $, via the Leray filtration on $ H^2(X, \QQ) $, in an extrinsic way. We note here that Zucker constructs a Hodge structure of weight $ 2 $ on $ H^1(C, j_*j^{-1}R^1f_*\QQ) $ in an intrinsic manner, and that the two Hodge structures coincide. We recall some of the results of Deligne regarding variations of Hodge structure, see \cite[Section 2]{zucker}: 

Let $ V $ be a variation of Hodge structure of weight $ m $ on a smooth projective variety $ S $. Let
\[ \OOO_{S}(V) = \sF^{0} \supset \cdots \sF^{m} \supset \sF^{m+1} = 0 \]
denote the Hodge subbundles of $ \OOO_{S}(V) $. The de Rham resolution $ V \to \Omega_{S}^{\bullet}(V) $ admits a filtration given by
\[ F^{p}\Omega_{S}^{\bullet}(V) = \sF^{p} \to \sF^{p-1} \otimes \Omega_{s}^{1} \to \sF^{p-2} \otimes \Omega_{s}^{2} \to \cdots \]
\begin{theorem}[Deligne]
  $ H^{i}(S, V) $, regarded as $ \HHH^{i}(S, \Omega_{S}^{\bullet}(V)) $, is a Hodge structure of weight $ i+m $, with the Hodge filtration given by:
  \[ F^{p}H^{i}(S, V) := \HHH^{i}(F^{p}\Omega_{S}^{\bullet}(V)), \]
  and
  \[ \Gr_{F}^{p}H^{i}(S,V) = \HHH^{i}(\Gr_{F}^{p}\Omega_{S}^{\bullet}(V)). \]
\end{theorem}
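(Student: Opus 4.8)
The plan is to deduce the statement from the classical Hodge theorem (the case $V=\QQ$) by running the same analytic argument with coefficients, the polarization of $V$ being the essential extra input; I will take $V$ to be polarized, which holds automatically in the cases of interest, where $V$ is a direct summand of some $R^kf_*\QQ$. First I would recall that the flat Gauss--Manin connection $\nabla$ on $\OOO_S(V)$ makes the de Rham complex $\Omega_S^\bullet(V)$ a resolution of the local system $V$, so that $H^i(S,V)=\HHH^i(S,\Omega_S^\bullet(V))$, and that Griffiths transversality $\nabla\sF^p\subset\sF^{p-1}\otimes\Omega_S^1$ makes $F^p\Omega_S^\bullet(V)$ a subcomplex, so that $F^pH^i(S,V):=\im\big(\HHH^i(F^p\Omega_S^\bullet(V))\to\HHH^i(\Omega_S^\bullet(V))\big)$ is a well-defined decreasing filtration. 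What then remains is to prove three things: (i) these maps are injective, so that $F^pH^i$ is literally $\HHH^i(F^p\Omega_S^\bullet(V))$; (ii) $F$ and its complex conjugate $\bar F$ are $(i+m)$-opposed, that is $H^i(S,V)_\CC=F^p\oplus\bar F^{\,i+m-p+1}$ for every $p$; and (iii) the isomorphism $\Gr_F^pH^i(S,V)=\HHH^i(\Gr_F^p\Omega_S^\bullet(V))$. Items (i) and (ii) together say precisely that $H^i(S,V)$ is a Hodge structure of weight $i+m$ with the stated Hodge filtration, and (iii) will come out of the same analysis.

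To prove (i) and (ii) I would argue with harmonic forms. Fix a Kähler metric on $S$ and the Hodge metric on $V$ coming from the polarization and the Weil operator, and decompose $\nabla$ into four pieces with respect to the $C^\infty$ splitting $\bigoplus_p\mathcal H^p$ of $V$ into Hodge bundles: $\nabla=\partial_++\bar\partial_++\theta+\bar\theta$, where $\partial_+,\bar\partial_+$ are the $(1,0)$ and $(0,1)$ parts of the metric connection and preserve the grading, $\theta$ is a $(1,0)$-form valued endomorphism lowering the Hodge filtration by one, and $\bar\theta$ is its conjugate, which is also its metric adjoint. Setting $\nabla'=\partial_++\bar\theta$ and $\nabla''=\bar\partial_++\theta$ one has $\nabla=\nabla'+\nabla''$ with $(\nabla')^2=(\nabla'')^2=0$, and the crucial step is the Kähler-type identity
\[ \Delta_\nabla = 2\,\Delta_{\nabla'} = 2\,\Delta_{\nabla''} \]
among the three Laplacians on $V$-valued forms. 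This is where the real work is: it follows from the ordinary Kähler identities on $S$ together with the curvature formula $R(\nabla_+)=-[\theta,\bar\theta]$ for the Hodge metric, which forces the a priori obstructing curvature terms to cancel, and it is the single point at which polarizability enters essentially. I expect this identity --- or, for the purposes of this paper, its citation from \cite{zucker} --- to be the main obstacle; the rest is bookkeeping.

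Granting the identity, the proof finishes as in the classical case. The elliptic operator $\Delta_\nabla$ has a finite-dimensional space of $V$-valued harmonic $i$-forms representing $H^i(S,V)_\CC$; being simultaneously $\nabla'$- and $\nabla''$-harmonic, each such form splits into components of pure type $\mathcal A^{a,b}(\mathcal H^c)$, each of which is again harmonic. Reading off the total bidegree, a harmonic representative of a class in $H^i$ decomposes into pieces of type $(p',q')=(a+c,\,b+m-c)$ with $p'+q'=i+m$, and the piece of type $(p',q')$ is the complex conjugate of the piece of type $(q',p')$; this is the weight-$(i+m)$ Hodge decomposition, and its filtration $F^pH^i$ is the span of the classes admitting a harmonic representative supported in types $p'\ge p$. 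The standard comparison between harmonic forms and the filtered de Rham complex then identifies this span with $\HHH^i(F^p\Omega_S^\bullet(V))$ and shows the map to $\HHH^i(\Omega_S^\bullet(V))$ is injective, giving (i) and (ii); the same bookkeeping restricted to a single pure type yields (iii).
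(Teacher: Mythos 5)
The paper does not prove this statement at all: it is quoted as a result of Deligne and cited to \cite[Section 2]{zucker}, and your harmonic-theoretic argument (the four-term decomposition of $\nabla$, the identity $\Delta_\nabla = 2\Delta_{\nabla'} = 2\Delta_{\nabla''}$ via the curvature of the Hodge metric, then pure-type decomposition of harmonic forms giving strictness and $E_1$-degeneration) is exactly the standard Deligne--Zucker proof of that cited result. Your explicit flagging of polarizability is apt, since the theorem as stated in the paper omits that hypothesis even though the proof needs it; it holds for the geometric variations used later, so nothing in the paper is affected.
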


\section{Elliptic curves}
We recall the construction of the universal family of elliptic curves over the upper half plane to fix the notation. Let $ \HH $ denote the upper half plane, and let $ \tau \in \HH $. Let $ \Gamma_{\tau} $ denote the lattice $ \ZZ + \ZZ\cdot\tau \subset \CC $. Let $ \EE_{\tau} $ denote the elliptic curve $ \CC/\Gamma_{\tau} $. Let $ h : \EE \to \HH $ be the family of elliptic curves over the upper half plane, consisting of all the $ \EE_{\tau} $. Note that $ h : \EE \to \HH $ is the \emph{universal} family of the following data: elliptic curves $ E $ together with a symplectic basis for $ H_{1}(E,\ZZ) $. We record some observations for later use: 

Consider the variation of Hodge structure of weight $ 1 $ on $ \HH $ given by:
\[ W_{\QQ} := R^{1}h_{*}\QQ \]
Let $ \sF^{\bullet} $ denote the Hodge filtration on $ \WW := W_{\QQ} \otimes_{\QQ} \OOO_{\HH} $. Then $ W_{\CC} $ is resolved by the complex
\[\Omega_{\HH}^{\bullet}(\WW) := \WW \to \WW \otimes \Omega^{1}_{\HH} \] 
The Hodge filtration induces a filtration on this complex, as follows: 
\[F^{0}\Omega^{\bullet}(\WW) := \WW \to \WW \otimes \Omega^{1}_{\HH} \]
\[F^{1}\Omega^{\bullet}(\WW) := \sF^{1} \to \WW \otimes \Omega^{1}_{\HH} \]
\[F^{2}\Omega^{\bullet}(\WW) := 0 \to \sF^{1} \otimes \Omega^{1}_{\HH} \]

We observe the following for later use: 
\begin{proposition}\label{elliptic-extremality}
  The complex $ \Gr_{\sF}^{1}\Omega_{\HH}^{\bullet}(\WW) $ is quasi-isomorphic to zero. 
\end{proposition}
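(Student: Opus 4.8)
The plan is to compute the graded piece $\Gr_{\sF}^1 \Omega_{\HH}^\bullet(\WW)$ explicitly as a two-term complex of sheaves on $\HH$ and show the differential is an isomorphism. By definition, $\Gr_{\sF}^1 \Omega_{\HH}^\bullet(\WW) = F^1\Omega^\bullet(\WW)/F^2\Omega^\bullet(\WW)$, which unwinds to the complex $\sF^1 \to (\WW/\sF^1) \otimes \Omega^1_{\HH}$ placed in degrees $0$ and $1$, with differential induced by the connection $\nabla$ (the de Rham differential of the local system $W_\CC$) followed by the projection $\WW \to \WW/\sF^1$. So the proposition amounts to the assertion that the composite $\bar\nabla : \sF^1 \to (\WW/\sF^1)\otimes \Omega^1_{\HH}$ is an isomorphism of line bundles on $\HH$.

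The key input is that this composite is precisely the Kodaira–Spencer map of the family $h : \EE \to \HH$, and that the universal family of elliptic curves has everywhere-nonvanishing Kodaira–Spencer class — equivalently, the period map $\HH \to \HH$ (here the identity) is an immersion, or the variation of Hodge structure satisfies Griffiths transversality with nondegenerate derivative. Concretely, I would use the standard frame: over $\tau \in \HH$, the lattice is $\ZZ + \ZZ\tau$, the bundle $\WW$ has the flat frame $dx, dy$ dual to the cycles, $\sF^1$ is spanned by $dz = dx + \tau\, dy$, and $\nabla(dz) = d\tau \otimes dy$. Since the image of $dy$ generates $\WW/\sF^1$, the map $\bar\nabla$ sends the generator $dz$ of $\sF^1$ to $(d\tau) \otimes \overline{dy}$, a nowhere-vanishing section of the line bundle $(\WW/\sF^1)\otimes\Omega^1_{\HH}$. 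Hence $\bar\nabla$ is an isomorphism, both terms of the complex are line bundles, and the cone is acyclic, i.e. $\Gr_{\sF}^1 \Omega_{\HH}^\bullet(\WW)$ is quasi-isomorphic to zero.

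I expect the main (and only real) obstacle to be bookkeeping: making sure the identification of the composite $\bar\nabla$ with the Kodaira–Spencer map is set up with consistent sign and normalization conventions matching those fixed in Section 3, and checking that $\nabla(dz) = d\tau\otimes dy$ in the chosen flat trivialization — this is a direct computation with the Gauss–Manin connection but is where an error would most plausibly creep in. Once the frame computation is pinned down, quasi-isomorphism to zero is immediate since an isomorphism of sheaves placed in consecutive degrees has vanishing cohomology in all degrees. One could alternatively phrase the whole argument abstractly via Deligne's Theorem above, observing that $\Gr_F^1 H^i(\HH, W_\CC) = \HHH^i(\Gr_F^1\Omega^\bullet(\WW))$ and that the relevant Hodge numbers force the hypercohomology to vanish, but the direct frame computation is cleaner and self-contained.
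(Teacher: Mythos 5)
Your proposal is correct and follows essentially the same route as the paper: identify $\Gr_{\sF}^{1}\Omega_{\HH}^{\bullet}(\WW)$ with the two-term complex of line bundles $\sF^{1} \to (\WW/\sF^{1})\otimes\Omega^{1}_{\HH}$ and observe that the differential is the Kodaira--Spencer map, which is an isomorphism for the universal family $h:\EE\to\HH$. Your explicit frame computation $\nabla(dz)=d\tau\otimes dy$ simply substantiates the nondegeneracy that the paper invokes directly, and it is carried out correctly.
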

\begin{proof}
  The complex in question is $ \Gr_{\sF}^{1} \to \Gr_{\sF}^{0} \otimes \Omega_{\HH}^{1} $. Both pieces are rank one vector bundles and the differential is an isomorphism: this follows from the nondegeneracy of the Kodaira-Spencer class for the family $ h : \EE \to \HH $. 
\end{proof}

\section{Quaternionic examples}
We refer to Rotger's thesis \cite[Chapter 4]{rotger} for details about quaternions and abelian varieties with quaternionic multiplication. Let $ B $ be an indefinite quaternion division algebra over $ \QQ $. In concrete terms, $ B $ is a division algebra over $ \QQ $, which is spanned by $ 1,i,j,ij $; subject to the relations $ i^{2} = a, j^{2} = b $ and $ ij = -ji $ for some $ a,b \in \QQ^{\times} $ and such that at least one of $ a $ and $ b $ is positive. Fix an isomorphism
\[ \eta : B \otimes \RR \cong M_{2}(\RR). \]
The set of primes $ p $ such that $ B \otimes_{\QQ}\QQ_{p} \not\cong M_{2}(\QQ_{p}) $ is finite and has even cardinality. The product of all these primes is called the \emph{discriminant} of $ B $. Let $ D $ denote the discriminant of $ B $. Let $ \OO $ be a maximal order in $ B $, that is, $ \OO $ is a maximal element among rank $ 4 $ subrings $ A $ of $ B $ such that $ A \otimes_{\ZZ} \QQ = B $. Let $ \mu \in \OO $ be a pure quaternion such that $ \mu^{2} = -D $. Such a $ \mu $ actually exists, see \cite[Section 4.6.1]{rotger}. Let $ \OO^{1} $ denote the group of elements of $ \OO $ with reduced norm $ 1 $. Since $ B $ is a division algebra, the quotient $ \HH/\eta(\OO^{1}) $ is compact, see \cite[Chapter 9, Section 2]{shimura}.

Fix $ B, \OO $ and $ \mu $ as above. Consider the skew symmetric form
\[ E:\OO \otimes \OO \to \ZZ \]
given by
\[ E(\alpha,\beta) := \frac{-1}{D}\tr(\mu\alpha\bar\beta) = \frac{1}{D}\tr(\mu\bar\alpha\beta) \]
If $ u \in \OO^{1} $, then $ E(u\alpha, u\beta) = E(\alpha,\beta) $ for all $ \alpha, \beta \in \OO $. Therefore $ \OO^{1} < \Sp(E) $, where $ \Sp(E) $ denotes the symplectic group of the skew symmetric form $ E $. Note that after changing coefficients to $ \RR $, the inclusion $ \OO^{1} < \Sp(E) $ extends to

\[\begin{CD}
  \OO^{1} @>>> \Sp(E)\\
  @V{\eta}VV @VVV\\
  \Sp_{2}(\RR) @>>> \Sp_{4}(\RR)
\end{CD}\]

The image of $ \OO^{1} $ in $ \Sp_{4}(\RR) $ lands inside $ \Sp_{4}(\ZZ) $. Let $ G_{n} $ denote the kernel of the map $ \OO^{1} \to \Sp_{4}(\ZZ) \to \Sp_{4}(\ZZ/n\ZZ) $. Note that $ G_{n} $ is a finite index subgroup of $ \OO^{1} $. 

Now we construct a family of principally polarized abelian surfaces over $ \HH $ with quaternionic multiplication by $ \OO $. Let $ \Lambda_{\tau} $ be the lattice in $ \CC^{2} $ given by
\[ \Lambda_{\tau} := \eta(\OO)\cdot \left( \begin{array}{c} \tau \\ 1\\ \end{array} \right), \]
where $ \tau \in \HH $. Let $ \AAA_{\tau} $ denote the complex torus $ \CC^{2}/\Lambda_{\tau} $. Let $ E_{\tau} $ denote the skew symmetric form on $ \Lambda_{\tau} $ induced by $ E $ and also its $ \RR $-linear extension to $ \CC^{2} $. The skew symmetric form $ E_{\tau} $ determines a principal polarization on $ \AAA_{\tau} $. Let $ \AAA \to \HH $ denote the family over the upper half plane formed by all the $ \AAA_{\tau} $ for $ \tau \in \HH $. This is the universal family of principally polarized abelian surfaces $ A $ with quaternionic multiplication by $ \OO $, together with a symplectic basis for $ H_{1}(A,\ZZ) $. Let $ \XX $ denote the universal theta divisor on $ \AAA $, defined as the vanishing locus of the Riemann theta function \cite[Section 1]{grushevsky-hulek}.

\begin{proposition}
  $ \XX_{\tau} $ is either a smooth genus two curve or the union of two smooth isogenous genus one curves meeting transversely at a point. 
\end{proposition}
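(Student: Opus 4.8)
The plan is to combine the classical classification of theta divisors on principally polarized abelian surfaces with the observation that every member of the family $\AAA \to \HH$ carries quaternionic multiplication by $\OO$. By construction $\XX_\tau$ is a theta divisor for the principal polarization $E_\tau$ on $\AAA_\tau$; in particular it is an ample effective divisor with $\XX_\tau^2 = 2$. First I would run the standard dichotomy. If $\XX_\tau$ is irreducible, adjunction on the abelian surface (where the canonical class is $0$) gives $2p_a(\XX_\tau) - 2 = \XX_\tau^2 = 2$, so its arithmetic genus is $2$; moreover an irreducible theta divisor on a principally polarized abelian surface is smooth, since a singular point would force $\AAA_\tau$ to decompose as a product, contradicting irreducibility. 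Thus in this case $\XX_\tau$ is a smooth genus two curve (and $\AAA_\tau \cong \Jac(\XX_\tau)$). I would cite a standard reference, e.g. Birkenhake--Lange, for these facts rather than reprove them.

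If instead $\XX_\tau = \Theta_1 + \Theta_2$ is reducible, then, using that effective curves on an abelian surface have nonnegative self-intersection and that $\XX_\tau$ is ample (hence connected), the equation $\XX_\tau^2 = 2$ forces $\Theta_1^2 = \Theta_2^2 = 0$ and $\Theta_1 \cdot \Theta_2 = 1$; by adjunction each $\Theta_i$ is then a smooth genus one curve, and $\Theta_1 \cdot \Theta_2 = 1$ means the two components meet transversely in exactly one point. It remains to see that $\Theta_1$ and $\Theta_2$ are isogenous, and this is where the quaternions enter. After translating $\XX_\tau$ so that the node lies at the origin, each $\Theta_i$ is an elliptic subgroup of $\AAA_\tau$, and the addition map $\Theta_1 \times \Theta_2 \to \AAA_\tau$ is an isogeny (its kernel is finite because $\Theta_1 \cap \Theta_2$ is a single point, and both sides are surfaces). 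On the other hand, for every $\tau \in \HH$ the lattice $\Lambda_\tau = \eta(\OO)\cdot\binom{\tau}{1}$ is stable under left multiplication by $\eta(\OO)$, so the quaternion \emph{division} algebra $B = \OO \otimes_\ZZ \QQ$ embeds into $\End^0(\AAA_\tau) \cong \End^0(\Theta_1 \times \Theta_2)$. If $\Theta_1$ and $\Theta_2$ were not isogenous, then $\Hom^0(\Theta_1, \Theta_2) = \Hom^0(\Theta_2, \Theta_1) = 0$, so $\End^0(\AAA_\tau) \cong \End^0(\Theta_1) \times \End^0(\Theta_2)$ would be a commutative algebra (a product of copies of $\QQ$ and of imaginary quadratic fields), which cannot contain the noncommutative algebra $B$. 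Hence $\Theta_1$ and $\Theta_2$ are isogenous, which finishes the proof.

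The only part of this that is not purely formal is the reducible-case structure --- pinning down that a reducible theta divisor on a principally polarized abelian surface is precisely a transverse union of two smooth genus one curves --- but this is entirely classical and I would simply quote it. The genuinely relevant input here is the short endomorphism-algebra argument: a quaternion division algebra cannot act on a product of two non-isogenous elliptic curves, and this is exactly what rules out the ``bad'' degenerations and forces the isogeny in the singular fibers.
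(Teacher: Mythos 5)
Your proof is correct, and its decisive step is the same as the paper's: a noncommutative quaternion algebra $B$ embeds in $\End_\QQ(\AAA_\tau)$, and it cannot embed in the commutative algebra $\End_\QQ(E_1)\oplus\End_\QQ(E_2)$ attached to a product of non-isogenous elliptic curves. The organization, however, differs in a way worth noting. The paper splits according to whether $\AAA_\tau$ is simple or isogenous to $E_1\times E_2$, uses the endomorphism argument to force $E_1\sim E_2$, and then invokes Grushevsky--Hulek, Theorem 3.8, for the structure of the theta divisor; the author even remarks that in the CM case $\AAA_\tau\sim E^2$ it is not settled by this route whether $\XX_\tau$ is actually reducible (harmless, since the proposition is an either/or statement). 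You instead split on the reducibility of $\XX_\tau$ itself: in the irreducible case adjunction and the classical smoothness statement give a smooth genus two curve, and in the reducible case $\XX_\tau^2=2$, nonnegativity of self-intersections, and adjunction pin down the transverse union of two smooth genus one curves; you then feed the actual components into the endomorphism argument via the addition isogeny $\Theta_1\times\Theta_2\to\AAA_\tau$. This buys you two things: the ambiguity the paper flags never arises, because the isogeny statement is proved for the components that actually occur, and the divisor-structure input you need is more elementary (intersection theory plus standard facts from Birkenhake--Lange) than the cited classification. Minor polish if you write it up: in the reducible case say a word ruling out three or more components and multiplicities (ampleness gives each component positive degree against $\XX_\tau$, so $\XX_\tau^2=2$ bounds everything), and note that a component of arithmetic genus one is smooth because abelian surfaces contain no rational curves; also, as in the paper, only noncommutativity of $B$ is used here, not that it is division.
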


\begin{proof}
$ \AAA_{\tau} $ is either a simple abelian surface or isogenous to a product $ E_{1} \times E_{2} $ of two elliptic curves. In the former case, $ \XX_{\tau} $ must be a nonsingular genus $ 2 $ curve. In the latter case, $ \AAA_{\tau} \sim E_{1} \times E_{2} $, the elliptic curves $ E_{1} $ and $ E_{2} $ must be isogenous, since otherwise the endomorphism algebras will satisfy:
\[ B \subseteq \End_{\QQ}(\AAA_{\tau}) \cong \End_{\QQ}(E_{1}) \oplus \End_{\QQ}(E_{2}). \]
But this cannot happen, as $ B $ is noncommutative and $ \End_{\QQ}(E_{1}) \oplus \End_{\QQ}(E_{2}) $ is commutative. Also note that if $ \AAA_{\tau} \sim E^{2} $ for an elliptic curve $ E $, the endomorphism algebras must satisfy:
\[ B \subseteq \End_{\QQ}(\AAA_{\tau}) \cong M_{2}(\End_{\QQ}(E)) \]
and therefore $ B $ must contain the imaginary quadratic field $ \End_{\QQ}(E) $ as a $ \QQ $-subalgebra. Now the assertion follows from \cite[Theorem 3.8]{grushevsky-hulek}. (It isn't yet clear to me whether the theta divisor in this case really a union of two elliptic curves or a smooth genus $ 2 $ curve, see the comment after \cite[Corollary 10.6.3]{birkenhake-lange}.)
\end{proof}

Let $ g_{n} : A_{n} \to C_{n} $ denote the quotient of $ g : \AAA \to \HH $ by the group $ G_{8n} $ and let $ f_{n} : X_{n} \to C_{n} $ be the quotient of $ \XX \to \HH $ by $ G_{8n} $ for $ n \geq 1 $. This is well defined, see \cite[Section 1]{grushevsky-hulek}. Also, it admits a (multi-)section given by tracing a Weierstrass point of a general $ \XX_{\tau} $. 

\begin{proposition}
  $ R^{1}g_{n*}\QQ = j_{*}j^{-1}R^{1}f_{n*}\QQ $. 
\end{proposition}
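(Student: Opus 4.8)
The plan is to exploit that, in contrast with $ f_n $, the morphism $ g_n $ has no singular fibres at all; this makes $ R^1 g_{n*}\QQ $ a local system on the whole of $ C_n $, so that applying $ j_* j^{-1} $ does nothing to it, and the statement reduces to comparing the two families over the open set where the theta divisor stays smooth. Throughout, write $ S_n \subset C_n $ for the finite set of points over which $ f_n $ has singular fibre, put $ U_n = C_n - S_n $, and let $ j : U_n \hookrightarrow C_n $ be the inclusion.

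First I would check that $ g_n : A_n \to C_n $ is smooth and proper. The family $ g : \AAA \to \HH $ is a holomorphic submersion with compact fibres $ \AAA_\tau = \CC^2/\Lambda_\tau $, hence smooth and proper, and $ C_n = \HH/G_{8n} $ is a compact Riemann surface since $ \OO^1 $ is cocompact in $ \SL_2(\RR) $ and $ G_{8n} $ has finite index. The crucial point is that $ G_{8n} $ acts freely on $ \HH $: left multiplication of $ \OO^1 $ on $ \OO \cong \ZZ^4 $ is faithful, so $ \OO^1 \hookrightarrow \Sp_4(\ZZ) $; a torsion element of $ G_{8n} $ is then a finite-order matrix congruent to the identity modulo $ 8n \ge 3 $, hence equals the identity by Minkowski's lemma, so $ G_{8n} $ is torsion free; and a torsion-free discrete subgroup of $ \SL_2(\RR) $ acts freely on $ \HH $ because point stabilisers are compact and meet a discrete group in a finite subgroup. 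Being $ G_{8n} $-equivariant, $ g $ descends to the smooth proper family $ g_n $, so $ R^1 g_{n*}\QQ $ is a $ \QQ $-local system on $ C_n $. For any local system $ \mathcal L $ on a curve and dense open $ j : U_n \hookrightarrow C_n $ with finite complement, the adjunction $ \mathcal L \to j_* j^{-1}\mathcal L $ is an isomorphism: near a point of $ S_n $ the sheaf $ \mathcal L $ is constant on a disc and a section over the punctured disc extends uniquely. Hence $ R^1 g_{n*}\QQ = j_*\, j^{-1} R^1 g_{n*}\QQ $.

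It then remains to identify $ j^{-1} R^1 g_{n*}\QQ $ with $ j^{-1} R^1 f_{n*}\QQ $. The universal theta divisor $ \XX \subset \AAA $ descends to $ X_n \subset A_n $, and over $ U_n $ the fibre $ \XX_\tau $ is a smooth genus two curve embedded in $ \AAA_\tau $ as a principal, hence ample, theta divisor. I would use the inclusion $ f_n^{-1}(U_n) \hookrightarrow g_n^{-1}(U_n) $ to get a restriction map of local systems $ R^1 g_{n*}\QQ|_{U_n} \to R^1 f_{n*}\QQ|_{U_n} $ whose fibre over $ \tau $ is the restriction $ H^1(\AAA_\tau,\QQ) \to H^1(\XX_\tau,\QQ) $; this is injective by the Lefschetz hyperplane theorem, and since both sides have dimension $ 4 $ — the target because $ \XX_\tau $ has genus two — it is an isomorphism. (Equivalently, Torelli identifies $ \AAA_\tau $ with $ \Jac(\XX_\tau) $, so the two first cohomologies coincide canonically.) Hence the morphism of local systems is an isomorphism and, combining with the previous paragraph, $ R^1 g_{n*}\QQ = j_*\, j^{-1} R^1 g_{n*}\QQ = j_*\, j^{-1} R^1 f_{n*}\QQ $.

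The only genuine difficulty is the first step — verifying that $ g_n $ is a bona fide smooth family — and this is exactly where the level $ 8n $ (in particular $ \ge 3 $) is needed, through Minkowski's lemma on torsion in congruence subgroups. Once that is secured, the comparison over $ U_n $ is formal, using only the smoothness and ampleness of the theta divisor there.
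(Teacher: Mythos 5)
Your proof is correct and takes essentially the same route as the paper, whose one-line argument (``$\AAA_\tau$ is the Albanese of $\XX_\tau$'') is exactly your identification $H^1(\AAA_\tau,\QQ)\cong H^1(\XX_\tau,\QQ)$ over the locus where the theta divisor is smooth. The additional details you supply --- torsion-freeness of $G_{8n}$ making $g_n$ a smooth proper family, and the fact that the adjunction $\mathcal L \to j_*j^{-1}\mathcal L$ is an isomorphism for a local system --- are the bookkeeping the paper leaves implicit.
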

\begin{proof}
  This follows from the fact that $ \AAA_{\tau} $ is the Albanese of $ \XX_{\tau} $. 
\end{proof}

\begin{theorem}
  $ f_{n} : X_{n} \to C_{n} $ is extremal, and hence $ X_{n} $ is Picard maximal. 
\end{theorem}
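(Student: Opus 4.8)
The plan is to verify that $f_n$ is \emph{extremal} and then invoke the proposition that an extremal fibration is Picard maximal. By the preceding proposition $R^1 g_{n*}\QQ = j_* j^{-1} R^1 f_{n*}\QQ$, so it suffices to show $H^1(C_n, R^1 g_{n*}\CC)^{1,1} = 0$. First I would record that, because the level $8n$ is at least $3$, the group $G_{8n}$ is torsion free; it therefore acts freely on $\HH$, and hence freely on the total space $\AAA$ (an element fixing a fibre of $\AAA \to \HH$ must fix the corresponding point of $\HH$). Consequently $C_n = \HH/G_{8n}$ is a smooth compact curve and $g_n : A_n \to C_n$ is a smooth projective family of abelian surfaces with no singular fibres. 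Thus $\WW_n := R^1 g_{n*}\QQ$ is a polarizable variation of Hodge structure of weight $1$ on all of $C_n$, and Deligne's theorem applies: $H^1(C_n, \WW_n\otimes\CC)$ is a Hodge structure of weight $2$ with $\Gr_F^1 H^1(C_n, \WW_n\otimes\CC) = \HHH^1(C_n, \Gr_F^1\Omega^\bullet_{C_n}(\WW_n))$, and extremality amounts to the vanishing of this group.

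Next I would identify $\Gr_F^1\Omega^\bullet_{C_n}(\WW_n)$. If $\sF^1 \subset \sF^0$ are the Hodge bundles of $\WW_n$ (so $\sF^2 = 0$ by weight $1$), it is the two term complex
\[ \Gr_F^1\Omega^\bullet_{C_n}(\WW_n) \; : \quad \sF^1 \longrightarrow (\sF^0/\sF^1)\otimes\Omega^1_{C_n}, \]
with differential the Kodaira--Spencer map of $g_n$; here $\sF^1$ and $\sF^0/\sF^1$ have rank $g = 2$. Now the quaternionic multiplication enters: the action of $\OO$ on each fibre $\AAA_\tau$ by endomorphisms induces an action of $B$ on $\WW_n$ by endomorphisms of the variation of Hodge structure, so $B\otimes\CC \cong M_2(\CC)$ acts on $\WW_n\otimes\CC$. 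The idempotents $e_{11}, e_{22} \in M_2(\CC)$ are flat and preserve the Hodge filtration, so they split $\WW_n\otimes\CC = \VV^{(1)}\oplus\VV^{(2)}$ into rank $2$ sub-variations of Hodge structure (over $\CC$, which is all that is needed) and split the complex above into a sum over $i = 1, 2$ of two term complexes
\[ \sF^1\VV^{(i)} \longrightarrow (\sF^0/\sF^1)\VV^{(i)}\otimes\Omega^1_{C_n}. \]
Since $\sF^1$ is $B$-stable of rank $2$, at each point it is a copy of the standard representation of $M_2(\CC)$, so $e_{11}\sF^1$ and $e_{22}\sF^1$ are \emph{line} bundles, and similarly for $\sF^0/\sF^1$; thus each of the displayed complexes is a map of line bundles on $C_n$.

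Finally I would argue that each of these maps of line bundles is an isomorphism, so that $\Gr_F^1\Omega^\bullet_{C_n}(\WW_n)$ is quasi-isomorphic to zero, $\Gr_F^1 H^1(C_n, \WW_n\otimes\CC) = 0$, and $f_n$ is extremal. Because $C_n$ is compact, it suffices to check that the Kodaira--Spencer map of each $\VV^{(i)}$ is nowhere vanishing. This is the quaternionic analogue of Proposition \ref{elliptic-extremality}: $C_n = \HH/G_{8n}$ is a quotient of the period domain $\HH$ of the universal family $\AAA \to \HH$, the period map is the identity, and differentiating the family $\AAA_\tau = \CC^2/\eta(\OO)\binom{\tau}{1}$ in $\tau$ exhibits its Kodaira--Spencer class as everywhere nondegenerate, exactly as in the elliptic case; the $B$-equivariance then forces each rank one piece to be nondegenerate as well. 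I expect the last step, together with the module-theoretic bookkeeping of the previous paragraph, to be the main point: one must make sure that the $M_2(\CC)$-action really does split $\sF^1$ into line bundles and that the resulting rank one Kodaira--Spencer maps are nowhere vanishing rather than merely generically nonzero. Everything else is formal, given Deligne's theorem and the two propositions already established.
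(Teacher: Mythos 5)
Your reduction is the same as the paper's: use the preceding proposition to replace $f_{n}$ by the smooth abelian scheme $g_{n}$, invoke Deligne's theorem to identify $H^{1}(C_{n}, R^{1}g_{n*}\CC)^{1,1}$ with the hypercohomology of $\Gr^{1}_{F}\Omega^{\bullet}_{C_{n}}(R^{1}g_{n*}\CC)$, and then kill that complex by showing its differential, the Kodaira--Spencer map $\sF^{1} \to (\sF^{0}/\sF^{1})\otimes\Omega^{1}_{C_{n}}$, is an isomorphism. Where you genuinely diverge is in how the quaternionic multiplication enters. You stay on $C_{n}$ and split the complexified variation by the idempotents $e_{11}, e_{22}$ of $B\otimes\CC \cong M_{2}(\CC)$ into two line-bundle complexes, aiming to show each rank-one Kodaira--Spencer map is nowhere vanishing. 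The paper instead observes that quasi-isomorphism to zero is a local analytic statement, pulls everything back to $\HH$, and writes down an explicit isomorphism of \emph{real} variations $V_{\RR} \cong W_{\RR}^{\oplus 2}$ with two copies of the elliptic variation, via the identifications $\Lambda_{\tau}\otimes_{\ZZ}\RR = M_{2}(\RR) = \Gamma_{\tau}^{\oplus 2}\otimes_{\ZZ}\RR$; the vanishing is then inherited verbatim from Proposition \ref{elliptic-extremality}. The two decompositions are of course the same underlying $M_{2}$-module structure, but the paper's version buys the nondegeneracy for free, while yours has to re-establish it.

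That re-establishment is exactly where your write-up is thin, and you flag it yourself: the claim that the Kodaira--Spencer class of $\AAA \to \HH$ is \emph{everywhere} nondegenerate ``exactly as in the elliptic case'' is the entire content of the theorem, and as written it is asserted rather than proved (note also that ``the period map is the identity'' needs care: the target of the classical period map is the genus-two Siegel space, into which $\HH$ maps as the QM locus, so nondegeneracy of the full rank-two map is not formal). Two clean ways to close it inside your framework: (i) do the paper's computation --- differentiating $\Lambda_{\tau} = \eta(\OO)\cdot(\tau,1)^{t}$ in $\tau$ literally reproduces two copies of the elliptic Kodaira--Spencer computation, so the full map is a direct sum of two nondegenerate rank-one maps; or (ii) note that Kodaira--Spencer is $B$-equivariant (the $\OO$-action is flat and preserves the Hodge filtration), that $\sF^{1}_{t}$ and $(\sF^{0}/\sF^{1})_{t}$ are each the standard representation of $M_{2}(\CC)$, and apply Schur's lemma: an equivariant map between them is zero or an isomorphism, so it suffices that Kodaira--Spencer be nonzero at each point, which again comes down to the same differentiation. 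Finally, compactness of $C_{n}$ is not what you need at the end: a nowhere-vanishing map of line bundles is an isomorphism over any base, and ``nowhere vanishing'' is precisely what (i) or (ii) supplies; generic nonvanishing plus compactness would not suffice.
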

\begin{proof}
  Because of the previous proposition, it suffices to show that
  \[ H^{1}(C_{n}, R^{1}g_{n*}\QQ)^{1,1} = 0. \]
  By Theorem 6, this group is the hypercohomology of the complex $ Gr^{1}\Omega_{C_{n}}^{\bullet}(R^{1}g_{n*}\QQ) $. We will show that the complex $ Gr^{1}\Omega_{C_{n}}^{\bullet}(R^{1}g_{n*}\QQ) $ is quasi-isomorphic to zero on $ C_{n} $. Since this statement is local analytic on the base, it suffices to prove that
  \[ Gr^{1}\Omega_{\HH}^{\bullet}(R^{1}g_{*}\QQ) \]
  is quasi-isomorphic to zero on $ \HH $. Let $ V_{\QQ} $ denote the variation of Hodge structure $ R^{1}g_{*}\QQ $ on $ \HH $. We claim that
  \[ V_{\RR} = W_{\RR}^{\oplus 2} \]
  as variations of real Hodge structures. To show this, we calculate the real Hodge structures $ H^{1}(\AAA_{\tau}, \RR) $ and $ H^{1}(\EE_{\tau}, \RR)^{\oplus 2} $ and show that they are equal:

  For $ \EE_{\tau} $:
  \[ H^{1}(\EE_{\tau}, \CC)^{\oplus 2} = \Hom_{\ZZ}(\Gamma_{\tau} , \CC)^{\oplus 2} = \Hom_{\RR}(\Gamma_{\tau}^{\oplus 2} \otimes_{\ZZ} \RR , \CC), \]
  and consequently the Hodge decomposition is:
  \[ \Hom_{\RR}(\Gamma_{\tau}^{\oplus 2} \otimes_{\ZZ} \RR , \CC) = \Hom_{\CC}(\Gamma_{\tau}^{\oplus 2} \otimes_{\ZZ} \RR , \CC) \oplus \Hom_{\bar\CC}(\Gamma_{\tau}^{\oplus 2} \otimes_{\ZZ} \RR , \CC), \]
  where on the right side, the homomorphisms are with respect to the complex structure coming from $ \Gamma_{\tau} \subset \CC $. 
  Similarly for $ \AAA_{\tau} $, 
  \[ H^{1}(\AAA_{\tau}, \CC) = \Hom_{\ZZ}(\Lambda_{\tau} , \CC) = \Hom_{\RR}(\Lambda_{\tau} \otimes_{\ZZ} \RR , \CC), \]
  and the Hodge decomposition:
  \[ \Hom_{\RR}(\Lambda_{\tau} \otimes_{\ZZ} \RR , \CC) = \Hom_{\CC}(\Lambda_{\tau} \otimes_{\ZZ} \RR , \CC) \oplus \Hom_{\bar\CC}(\Lambda_{\tau} \otimes_{\ZZ} \RR , \CC), \]
  where on the right side, the homomorphisms are with respect to the complex structure coming from $ \Lambda_{\tau} \subset \CC^{2} $. 
  Now observe that $ \Gamma_{\tau}^{\oplus 2} \otimes_{\ZZ} \RR = M_{2}(\RR) $ and $ \Lambda_{\tau} \otimes_{\ZZ} \RR = M_{2}(\RR) $ with respect to the canonical symplectic bases of $ \Gamma_{\tau} $ and $ \Lambda_{\tau} $. Therefore the two Hodge structures are equal under the implicit canonical identifications. Thus the result follows, by proposition \ref{elliptic-extremality}. 
\end{proof}

\begin{proposition}\label{generaltype}
  $ X_{n} $ is a surface of general type and $ p_{g}(X_{n}) > 0 $ for all large enough $ n $. 
\end{proposition}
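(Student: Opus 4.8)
The plan is to exhibit $X_n$ as a surface with a genus-two fibration over a curve of large genus and then invoke the standard numerical criteria for a fibered surface to be of general type. First I would analyze the base curve $C_n = \HH/G_{8n}$: since $\OO^1$ is a cocompact lattice acting on $\HH$ and $G_{8n}$ has finite index in $\OO^1$, the quotient $C_n$ is a compact Riemann surface (a quaternionic Shimura curve of level $8n$), and for $n$ large enough $G_{8n}$ is torsion-free, so $C_n$ is smooth and its genus $g(C_n)$ grows without bound as $n \to \infty$ (the index $[\OO^1 : G_{8n}]$ grows, while the Euler characteristic scales with the index). I would record that for $n \gg 0$ we have $g(C_n) \geq 2$.

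Next I would set up the canonical bundle formula for the genus-two fibration $f_n : X_n \to C_n$. Away from the finite set $S \subset C_n$ of points where the theta divisor degenerates (Proposition~9), $f_n$ is a smooth family of genus-two curves, and over $S$ the fibers are unions of two elliptic curves meeting at a point, i.e.\ semistable. Hence $f_n$ is a semistable fibration, and the relative dualizing sheaf formula gives $K_{X_n} = f_n^* (K_{C_n} \otimes (\det f_{n*}\omega_{X_n/C_n})) \otimes \OO_{X_n}(\text{vertical})$, so that $K_{X_n}^2$ and $K_{X_n}\cdot F$ (with $F$ a fiber) can be computed from $\deg f_{n*}\omega_{X_n/C_n}$, which is positive and grows with $n$ because the family is non-isotrivial (the period map to $\mathfrak H_2$ is the Shimura embedding, which is an immersion). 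I would then check $K_{X_n}\cdot F = 2g(F) - 2 = 2 > 0$ and $K_{X_n}^2 > 0$ for $n$ large, and use the classification of surfaces: a minimal surface with a fibration of fiber genus $\geq 2$ over a base of genus $\geq 1$ is of general type unless it is birationally a product or quasi-bundle, which is excluded here by non-isotriviality; alternatively one argues directly that $X_n$ is not rational, ruled, abelian, K3, Enriques, bielliptic, or properly elliptic by Kodaira-dimension bookkeeping, leaving $\kappa(X_n) = 2$.

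For the statement $p_g(X_n) > 0$, I would compute $h^0(X_n, K_{X_n})$ via the Leray spectral sequence for $f_n$: one has $p_g(X_n) = h^0(C_n, f_{n*}\omega_{X_n}) + h^1(C_n, f_{n*}\omega_{X_n/C_n})$, and since $f_{n*}\omega_{X_n/C_n}$ is a rank-two bundle of positive degree on $C_n$ while $f_{n*}\omega_{X_n} = f_{n*}\omega_{X_n/C_n} \otimes \omega_{C_n}$, both terms become positive once $\deg f_{n*}\omega_{X_n/C_n}$ and $g(C_n)$ are large enough; concretely $h^0(C_n, \omega_{C_n} \otimes f_{n*}\omega_{X_n/C_n}) \geq 2g(C_n) - 2 + \deg f_{n*}\omega_{X_n/C_n} + 2(1 - g(C_n)) = \deg f_{n*}\omega_{X_n/C_n} > 0$ is already forced by Riemann-Roch on $C_n$ once the bundle has positive degree.

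The main obstacle I anticipate is controlling $\deg f_{n*}\omega_{X_n/C_n}$ — i.e.\ showing it is strictly positive (equivalently, that the fibration is not isotrivial) and that it grows with $n$. This should follow from the fact that the classifying map $C_n \to \mathcal A_2$ is, up to the finite-level quotient, the Shimura curve parametrizing abelian surfaces with quaternionic multiplication, hence a non-constant map whose image is a genuine curve; pulling back the Hodge bundle on $\mathcal A_2$ gives $f_{n*}\omega_{X_n/C_n}$ (using that $\AAA_\tau = \Jac(\XX_\tau)$, Proposition~10), and the degree of this pullback is a positive multiple of $[\OO^1 : G_{8n}]$ by the Gauss-Bonnet / Chern class computation on the Shimura curve. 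Once this positivity and growth are in hand, the general-type conclusion and $p_g > 0$ are routine consequences of the surface classification and Riemann-Roch, as sketched above.
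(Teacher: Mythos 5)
Your overall strategy is sound and genuinely different from the paper's, which handles both claims by citation: $p_g(X_n)>0$ because $\omega_{X_n/C_n}$ has explicit global sections coming from Igusa's theta series (the group $G_{8n}$ lies in Igusa's $\Gamma(4,8)$), and general type because Viehweg's theorem (Iitaka's $C_{2,1}$ for fibrations of relative dimension one) gives $\kappa(X_n)\geq \kappa(\text{fiber})+\kappa(C_n)=1+1=2$ once $C_n$ is smooth of genus at least two. You instead argue numerically: positivity of $\deg f_{n*}\omega_{X_n/C_n}$ via the non-constant period map to $\mathcal{A}_2$ (pullback of the Hodge bundle along the Shimura curve), Riemann--Roch on $C_n$ for $p_g>0$, and nefness plus classification for general type. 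This buys a self-contained, quantitative argument that avoids Viehweg's theorem; the paper's route is far shorter and needs no discussion of relative minimality, semistability or slopes. Two small corrections to your $p_g$ step: $p_g(X_n)=h^0(C_n,f_{n*}\omega_{X_n})$ exactly (there is no additional $h^1$ summand -- that term belongs to $q$ or to $\chi$, not to $h^0(\omega)$), and since $f_{n*}\omega_{X_n/C_n}$ has rank two the Euler characteristic is $2(2g(C_n)-2)+\deg f_{n*}\omega_{X_n/C_n}+2(1-g(C_n))=2g(C_n)-2+\deg f_{n*}\omega_{X_n/C_n}$; with Fujita semipositivity this is already positive once $g(C_n)\geq 2$, so strict positivity of the Hodge bundle degree (your ``main obstacle'') is not actually needed for $p_g>0$, though your argument for it is correct and harmless.

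The one step that fails as written is the canonical bundle formula $K_{X_n}=f_n^*\bigl(K_{C_n}\otimes\det f_{n*}\omega_{X_n/C_n}\bigr)\otimes\OO_{X_n}(\text{vertical})$: any bundle of that shape restricts trivially to a general fiber, whereas $K_{X_n}|_F=K_F$ has degree $2$ -- it contradicts your own (correct) computation $K_{X_n}\cdot F=2$. A formula of this kind is special to elliptic fibrations and does not hold for genus-two fibrations. The repair is standard: the fibers are either smooth of genus two or two elliptic curves meeting transversally, so they contain no rational curves and $f_n$ is relatively minimal; by Arakelov's theorem $\omega_{X_n/C_n}$ is then nef, hence $K_{X_n}=\omega_{X_n/C_n}\otimes f_n^*\omega_{C_n}$ is nef and $K_{X_n}^2\geq 2\,(2\cdot 2-2)(2g(C_n)-2)=8(g(C_n)-1)>0$, and a minimal surface with $K$ nef and $K^2>0$ is of general type; note that with $g(C_n)\geq 2$ non-isotriviality is not even required. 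Alternatively, one can simply invoke Viehweg's subadditivity as the paper does.
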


\begin{proof}
  For large enough $ n $, the curve $ C_{n} $ is smooth and has genus at least two. The bundle $ \omega_{X_{n}/C_{n}} $ admits global sections given by Siegel modular theta functions, so $ p_{g}(X_{n}) > 0 $, see \cite[p.411]{igusa}. (Note that our $ G_{8n} $ is contained in Igusa's $ \Gamma(4,8) $). Iitaka's conjecture $ C_{2,1} $ regarding the subadditivity of Kodaira dimension implies that $ X_{n} $ is a surface of general type, see \cite[p.197]{viehweg}. 
\end{proof}

\subsection*{Note}
The construction in \cite{arapura-solapurkar} may be regarded as a degenerate form of the above construction, in case of the quaternion algebra $ M_{2}(\QQ) $. There we had to go through some adjustments to $ \EE \times_{\HH} \EE $ before passing to the fiberwise theta divisor. 



\end{document}